\documentclass{amsart}
\usepackage{macros}
\standardsettings
\colorcommentstrue

\draftfalse

\begin{document}
\title{Variations on Dirichlet's theorem}

\authorlior\authordavid

%\subjclass[2010]{Primary }
%\keywords{}
%\date{}
%\dedicatory{}

\maketitle

\let\temp\mathbb
\let\mathbb\amsbb

\begin{abstract}
We give a necessary and sufficient condition for the following property of an integer $d\in\N$ and a pair $(a,A)\in\mathbb R^2$: There exist $\kappa > 0$ and $Q_0\in\N$ such that for all $\mathbf x\in \mathbb R^d$ and $Q\geq Q_0$, there exists $\mathbf p/q\in\mathbb Q^d$ such that $1\leq q\leq Q$ and $\|\mathbf x - \mathbf p/q\| \leq \kappa q^{-a} Q^{-A}$. This generalizes Dirichlet's theorem, which states that this property holds (with $\kappa = Q_0 = 1$) when $a = 1$ and $A = 1/d$. We also analyze the set of exceptions in those cases where the statement does not hold, showing that they form a comeager set. This is also true if $\mathbb R^d$ is replaced by an appropriate ``Diophantine space'', such as a nonsingular rational quadratic hypersurface which contains rational points. Finally, in the case $d = 1$ we describe the set of exceptions in terms of classical Diophantine conditions.
\end{abstract}

\let\amsbb\mathbb
\let\mathbb\temp

\section{Introduction}

Fix $d\in\N$. Dirichlet's theorem states that for every $\xx\in\R^d$ and $Q \geq 1$ there exists $\pp/q\in\Q^d$ %\footnote{Throughout the paper, rationals will be written as $\pp/q$, where $\pp$ is a primitive integer vector and $q > 0$.}
\comdavid{I deleted the footnote because the footnote mark was interfering with the inline math. There doesn't seem to be anywhere to place the footnote in the sentence which is more appropriate. But anyway, I don't think the footnote is necessary.} with $1\leq q\leq Q$ such that
\begin{equation}
\label{strong}
\left\|\xx - \frac{\pp}{q}\right\| < q^{-1}Q^{-1/d},
\end{equation}
where $\left\| \cdot \right\|$ denotes the max norm in $\R^d$. We recall that an immediate corollary to this theorem is that for every irrational $\xx\in\R^d\butnot\Q^d$, there exist infinitely many reduced rationals $\pp/q\in\Q^d$ satisfying
\[
\left\|\xx - \frac{\pp}{q}\right\| < q^{-(1+1/d)}.
\]
In this paper, we will refer to this statement as ``Dirichlet's corollary'', to distinguish it from ``Dirichlet's theorem'' which is the original statement \eqref{strong}.

Much of classical Diophantine approximation theory can be understood as an attempt to understand when and how Dirichlet's corollary can be improved. For example, we recall that if $\psi:\N\to (0,\infty)$, then a point $\xx\in\R^d$ is said to be \emph{$\psi$-approximable} if there exist infinitely many reduced rationals $\pp/q\in\Q^d$ satisfying
\[
\left\|\xx - \frac{\pp}{q}\right\| < \psi(q).
\]
For each $c\in\R$, we use the notation $\psi_c(q) := q^{-c}$. Then Dirichlet's corollary states that every irrational vector in $\R^d$ is $\psi_{1 + 1/d}$-approximable.

A natural analogue of the notion of $\psi$-approximability which is capable of expressing Dirichlet's theorem is as follows:
\begin{definition}
\label{definitionPsiapproximable}
Given $\Psi:\N\times\N\to (0,\infty)$ and $Q_0\in\N$, we will say that a point $\xx\in\R^d$ is \emph{$(\Psi,Q_0)$-approximable} if for all $Q\geq Q_0$, there exists $\pp/q\in\Q^d$ and $1\leq q\leq Q$ such that
\begin{equation}
\label{Psiapproximable}
\left\|\xx - \frac{\pp}{q}\right\| < \Psi(q,Q).
\end{equation}
If $\xx$ is $(\Psi,Q_0)$-approximable for some $Q_0\in\N$, then we say that $\xx$ is \emph{$\Psi$-approximable}. If every vector in a set $S$ is $(\Psi,Q_0)$-approximable with $Q_0$ uniform over $S$, then we say that $S$ is \emph{uniformly $\Psi$-approximable}; if $Q_0$ is not uniform, then we simply say that $S$ is \emph{$\Psi$-approximable}.

For each $a,A\in\R$, we use the notation $\Psi_{a,A}(q,Q) = q^{-a} Q^{-A}$. Then Dirichlet's theorem states that every vector in $\R^d$ is $(\Psi_{1,1/d},1)$-approximable.
\end{definition}

\begin{remark*}
One might guess that if $\Psi(q,Q) = \psi(q)$, then the set of $\Psi$-approximable points is the same as the set of $\psi$-approximable points. However, in this scenario, if $\psi(1) = 1$, then every point is considered to be $\Psi$-approximable (since $q = 1$ satisfies \eqref{Psiapproximable}), regardless of $\psi$-approximability. By making some changes to Definition \ref{definitionPsiapproximable}, one could rig it so that $\Psi$-approximability is in fact equivalent to $\psi$-approximability when $\Psi(q,Q) = \psi(q)$. However, there seems to be little point in doing so, and it would make the statement of our main theorem less elegant, so we use the definition as stated.
\end{remark*}

A special case of $\Psi$-approximability which has been considered in the literature is \emph{Dirichlet improvability} \cite{DavenportSchmidt1}: a point $\xx\in\R^d$ is said to be \emph{Dirichlet improvable} if it is $\alpha\Psi_{1,1/d}$-approximable for some $0 < \alpha < 1$. Moreover, $\xx$ is said to be \emph{singular} \cite{Baker1, Baker2} if this condition holds for every $0 < \alpha < 1$. However, until now the following natural question has not been answered: For which pairs $(a,A)\in\R^2$ is every point $\Psi_{a,A}$-approximable? Our main result is a complete answer to this question up to a multiplicative constant:

\begin{theorem}
\label{theorem1}
Fix $d\in\N$, and let $f_d:\R\to\R$ be given as follows:
\begin{equation}
\label{fddef}
f_d(a) = \begin{cases}
1 + |a| & a \leq 0\\
\left(\sum_{i = 0}^{d - 1} a^i\right)^{-1} & 0 \leq a \leq 1\\
1 + 1/d - a & 1 \leq a \leq 1 + 1/d\\
0 & a \geq 1 + 1/d
\end{cases}.
\end{equation}
Fix $(a,A)\in\R^2$.
\begin{itemize}
\item[(i)] If $A < f_d(a)$, then for every $\epsilon > 0$, $\R^d$ is uniformly $\epsilon\Psi_{a,A}$-approximable.
\item[(ii)] If $A > f_d(a)$, then $\R^d$ is not uniformly $\kappa\Psi_{a,A}$-approximable for any $\kappa > 0$.
\item[(iii)] If $A = f_d(a)$, then there exist $0 < \epsilon \leq \kappa < \infty$ such that $\R^d$ is uniformly $\kappa\Psi_{a,A}$-approximable but not uniformly $\epsilon\Psi_{a,A}$-approximable.
%\begin{itemize}
%\item[(iiia)] every point in $\R^d$ is $(\kappa\Psi_{a,A},1)$-approximable, but
%\item[(iiib)] for every $Q_0\in\N$, some point in $\R^d$ is not $(\epsilon\Psi_{a,A},Q_0)$-approximable.
%\end{itemize}
\end{itemize}
\end{theorem}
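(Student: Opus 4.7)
We split the proof along the four regions of $a$ in the definition of $f_d$, handling parts (i), (ii), (iii) in parallel within each. The outer regions $a \leq 0$ and $a \geq 1 + 1/d$, and the linear region $a \in [1, 1 + 1/d]$, admit elementary proofs of part (i) via a single application of Dirichlet's theorem. The central regime $a \in [0, 1]$, where $f_d(a) = 1/\sum_{i=0}^{d-1} a^i$, is the delicate case and requires iteration.

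For part (i) in the easy regions: when $a \leq 0$ and $A < 1-a$, take $q = \lfloor Q \rfloor$ with $\mathbf p$ the nearest integer vector to $q \mathbf x$, so $\|\mathbf x - \mathbf p/q\| \leq 1/(2q)$ is $< \epsilon q^{-a} Q^{-A}$ for $Q$ large. When $a \geq 1 + 1/d$ and $A < 0$, take $q = 1$ with $\mathbf p$ nearest to $\mathbf x$. When $a \in [1, 1+1/d]$ and $A < 1+1/d-a$, apply Dirichlet at parameter $\lceil Q^s \rceil$ for $s$ slightly above $A/(1+1/d-a)$. For the central regime, a single Dirichlet yields only $A < 1 + 1/d - a$, which is strictly weaker than $f_d(a)$ on $(0,1)$, so we iterate. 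Apply Dirichlet at scale $Q$ to obtain $(q_0, \mathbf p_0)$; either this satisfies the $(a,A)$-bound, or $q_0$ is forced to be small (specifically $q_0 \lesssim Q^{(A-1/d)/(1-a)}$), so $\mathbf x$ lies close to $\mathbf p_0/q_0$. Then apply Dirichlet to the residual vector $q_0 \mathbf x - \mathbf p_0$ at an appropriate sub-scale, combining to produce a refined approximation, and iterate. After at most $d$ such iterations, each failing level imposes an additional constraint on $\mathbf x$; the constraints compose multiplicatively, and the geometric identity $1 + a + a^2 + \cdots + a^{d-1}$ emerges from summing their exponent contributions, making the system infeasible precisely when $A < f_d(a)$.

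For part (ii), we construct a bad vector $\mathbf x$ inductively as a limit of rationals $\mathbf p_i/q_i$ with $q_{i+1} \approx q_i^r$ for the critical ratio $r = (1-a)/(f_d(a) - 1/d)$, chosen so that $\|\mathbf x - \mathbf p_i/q_i\|$ saturates the Dirichlet bound $q_i^{-1} q_{i+1}^{-1/d}$ up to an absolute constant. Setting $Q = q_{i+1} - 1$ makes $\mathbf p_i/q_i$ the best approximation with denominator $\leq Q$, and then $q_i^a \|\mathbf x - \mathbf p_i/q_i\| Q^A \to \infty$ when $A > f_d(a)$, ruling out $\kappa \Psi_{a,A}$-approximability for any $\kappa$; analogous constructions (badly approximable or Liouville-type) handle the outer regimes. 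Part (iii) follows by tracking constants in parts (i) and (ii) to pin down the critical $\kappa$. The main obstacle is the iterated Dirichlet argument for $a \in [0, 1]$: choosing the sub-scales at each of the $d$ iterations so the cumulative constraints assemble into exactly the geometric series $\sum_{i=0}^{d-1} a^i$, together with the case analysis for when intermediate approximations coincide across iterations, constitutes the technical heart of the proof.
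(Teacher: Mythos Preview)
Your handling of the outer regions and the reduction scheme is essentially what the paper does (it uses two comparison properties to reduce (i) and (ii) to (iii), and treats $a\le 0$, $a\ge 1+1/d$ by the choices $q=Q$, $q=1$; the segment $1\le a\le 1+1/d$ is then absorbed by monotonicity along lines $a+A=\text{const}$). The substantive disagreement is entirely in the central regime $a\in[0,1]$, and there your plan diverges from the paper in both directions, with a genuine gap on the negative side.

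\textbf{The negative direction (existence of $\epsilon$ / part (ii)).} Your plan is to build a \emph{single} $\mathbf x$ as a limit of rationals $\mathbf p_i/q_i$ with $q_{i+1}\approx q_i^r$, then set $Q=q_{i+1}-1$ and assert that $\mathbf p_i/q_i$ is the best approximation with denominator $\le Q$. In dimension $1$ this is exactly how continued fractions work, but for $d\ge 2$ there is no mechanism that forces every rational with denominator in $(q_i,q_{i+1})$ to approximate $\mathbf x$ worse than $\mathbf p_i/q_i$ does; your one-parameter lacunary sequence cannot by itself exclude approximations at the $d-1$ intermediate scales. This is precisely where the factor $\sum_{i=0}^{d-1}a^i$ has to enter, and it does not appear in your construction. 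The paper sidesteps the issue entirely: rather than build one $\mathbf x$, it builds a \emph{different} $\mathbf x$ for each $Q$, namely
\[
\mathbf x=(1/Q_1,\ldots,1/Q_d),\qquad Q_0\mid Q_1\mid\cdots\mid Q_d,\qquad Q_j\asymp Q^{\alpha_j},\quad \alpha_j=A\sum_{i=0}^{j-1}a^i.
\]
Then any $q\in\{1,\ldots,Q\}$ falls in some dyadic shell $Q_{i-1}\le 2q<Q_i$, and the $i$th coordinate alone gives $\|\mathbf x-\mathbf p/q\|\ge 1/Q_i$, from which $q^aQ^A\|\mathbf x-\mathbf p/q\|\gtrsim 1$ follows by the recursion $\alpha_i=A+a\alpha_{i-1}$. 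The $d$ coordinates are used to cover $d$ ranges of $q$, which is what your single-sequence construction is missing.

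\textbf{The positive direction (existence of $\kappa$).} Your iterated-Dirichlet scheme is a genuinely different route from the paper's and is sketched too loosely to evaluate; in particular you do not say how ``applying Dirichlet to the residual $q_0\mathbf x-\mathbf p_0$'' produces a new approximation to $\mathbf x$ with denominator still $\le Q$, nor how the $d$ iterations avoid collapsing onto the same approximation. The paper instead argues by contradiction: assuming $\|\mathbf x_q\|\ge\kappa q^{1-a}Q^{-A}$ for all $q\le Q$, it uses Minkowski's theorem to extract $k\le d$ \emph{linearly independent} small vectors $\mathbf y_i=\mathbf x_{q_i}$ with $\prod_i\|\mathbf y_i\|\lesssim 1/Q$, then shows by a counting/injectivity argument that $q_j\gtrsim\prod_{i<j}\|\mathbf y_i\|^{-1}$. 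Feeding this back into the contradiction hypothesis and taking logarithms yields a recursion whose telescoped sum is exactly $\sum_{i=0}^{k-1}a^i$, contradicting $A=f_d(a)$ once $\kappa$ is large. If you want to pursue iterated Dirichlet, you will need an analogue of this linear-independence step to prevent the iterates from degenerating; as written, your plan does not supply one.
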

Note that the function $f_d$ given by \eqref{fddef} is continuous and nonincreasing. We now give the proof of Theorem \ref{theorem1} assuming that it has been verified for the case $a\in [0,1]$, $A = f_d(a)$:
\begin{proof}
First we observe that uniform $\Psi$-approximability has the following comparison properties:
\begin{itemize}
\item[1.] If $0 < \epsilon \leq \kappa < \infty$ and $A_1 < A_2$, then every uniformly $\kappa\Psi_{a,A_2}$-approximable set is uniformly $\epsilon\Psi_{a,A_1}$-approximable.
\item[2.] If $a_1 + A_1 = a_2 + A_2$ and $a_1 < a_2$, then every uniformly $\alpha\Psi_{a_1,A_1}$-approximable set is uniformly $\alpha\Psi_{a_2,A_2}$-approximable.
\end{itemize}
Property (1) allows us to reduce cases (i) and (ii) of Theorem \ref{theorem1} to case (iii), while property (2) allows us to omit the verification along the segment $a\in (1,1 + 1/d)$. Along the segments $a\in \OC{-\infty}0$ and $a\in\CO{1 + 1/d}\infty$, the uniform $\Psi_{a,A}$-approximability of $\R^d$ is verified by setting $q = Q$ and $q = 1$ in Definition \ref{definitionPsiapproximable}, respectively, and then choosing $\pp$ so as to minimize $\|\xx - \pp/q\|$. The proof that $\R^d$ is not uniformly $\epsilon\Psi_{a,A}$-approximable when $a\in (-\infty,0)$ can be omitted due to property (2). The existence of badly approximable points implies that $\R^d$ is not $\epsilon\Psi_{a,A}$-approximable when $a\in\CO{1 + 1/d}\infty$. So we are reduced to proving (iii) along the segment $a\in[0,1]$; for this, see Section \ref{sectiontheorem1}.
\end{proof}

Given Theorem \ref{theorem1}, a natural question is whether the negative results in cases (ii) and (iii) can be improved by replacing ``uniformly $\Psi$-approximable'' by just ``$\Psi$-approximable''. In case (iii), this appears to be a delicate issue, but in case (ii) we have the following answer, which also shows that the set of exceptions to $\Psi$-approximability is in a sense ``large'':

\begin{theorem}
\label{theorem2}
Fix $d\in\N$ and $(a,A)\in\R^2$ such that $A > f_d(a)$. Then the set
\[
\{\xx\in\R^d : \text{$\xx$ is not $\kappa\Psi_{a,A}$-approximable for any $\kappa > 0$}\}
\]
is comeager.
\end{theorem}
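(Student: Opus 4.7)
The set in question equals the countable intersection $\bigcap_{n\in\N} E_n$, where $E_n := \{\xx \in \R^d : \xx \text{ is not } n\Psi_{a,A}\text{-approximable}\}$; approximability is monotone in the constant, so restricting to integer constants loses nothing. Since a countable intersection of comeager sets in the Baire space $\R^d$ is comeager, the plan is to show that for each $n$ the set $A_n$ of $n\Psi_{a,A}$-approximable points is meager.

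We next replace $A_n$ by a closed superset that is easier to analyze. For each $Q_0 \in \N$, define
\[
W_{n, Q_0} := \left\{\xx \in \R^d : \forall Q \geq Q_0,\ \exists \pp/q \in \Q^d \text{ with } 1 \leq q \leq Q \text{ and } \left\|\xx - \tfrac{\pp}{q}\right\| \leq n q^{-a} Q^{-A}\right\},
\]
obtained from Definition \ref{definitionPsiapproximable} by relaxing the strict inequality to a non-strict one. For each fixed $Q$ the inner condition defines a locally finite union of closed balls, so $W_{n,Q_0}$ is closed; moreover $A_n \subseteq \bigcup_{Q_0 \in \N} W_{n,Q_0}$. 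It therefore suffices to show each closed set $W_{n,Q_0}$ has empty interior.

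The main step will be a contradiction argument: assume an open ball $B = B(\xx_0, r) \subseteq W_{n, Q_0}$, and derive that all of $\R^d$ is uniformly $\kappa'\Psi_{a,A}$-approximable for some $\kappa' > 0$. The key tool is a rational-translation trick. Pick an integer $N > 1/(2r)$ so that the lattice $N^{-1}\Z^d$ is $r$-dense in the max norm; then every $\xx \in \R^d$ can be written as $\xx = \xx' + \ww/N$ with $\xx' \in B$ and $\ww \in \Z^d$. For any $Q' \geq \max(NQ_0, 2N)$, one sets $Q := \lfloor Q'/N \rfloor \geq Q_0$, extracts an approximation $\pp'/q'$ of $\xx'$ at scale $Q$ from the defining property of $W_{n,Q_0}$, and translates it to $\pp/q := (N\pp' + q'\ww)/(Nq')$. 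Routine estimates, using $q = Nq' \leq Q'$, $Q \geq Q'/(2N)$, and $A > f_d(a) \geq 0$, will yield
\[
\left\|\xx - \tfrac{\pp}{q}\right\| \leq \kappa' q^{-a} (Q')^{-A}
\quad\text{with}\quad \kappa' := 2^A n N^{a+A}.
\]
This shows $\R^d$ is uniformly $\kappa'\Psi_{a,A}$-approximable (with $\leq$, hence also with $<$ after doubling $\kappa'$), contradicting Theorem \ref{theorem1}(ii) since $A > f_d(a)$.

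The main obstacle is the transfer of uniform approximability from the small ball $B$ to all of $\R^d$: the $\Z^d$-periodicity of $W_{n,Q_0}$ alone does not propagate the property into the gaps between integer translates of $B$, so the rational-translation trick is essential. The substantive point it delivers is that translation by a rational of denominator $N$ inflates the approximation constant by only a factor depending on $N$, $n$, $a$, $A$; choosing $N$ large enough to cover $\R^d$ by a single orbit of $B$ under $N^{-1}\Z^d$ then produces a uniform $\kappa'$, and the contradiction with Theorem \ref{theorem1}(ii) is immediate.
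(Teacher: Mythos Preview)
Your argument is correct and shares the same skeleton as the paper's: write the exceptional set as $\bigcap_{\kappa,Q_0} U_{\kappa,Q_0}$, observe that each piece is open by local finiteness of the relevant rationals, and then use that rational affine maps of $\R^d$ distort both distances and heights by bounded multiplicative factors to propagate (non-)approximability between regions, invoking Theorem \ref{theorem1}(ii) for the contradiction. The difference is one of packaging and direction. The paper abstracts the transfer step into an ``automorphism property'' for general Diophantine spaces (Proposition \ref{propositiongeneral}), then specializes; for $\R^d$ the automorphisms are rational affine contractions sending $[0,1]^d$ into a given ball $B$, and one pushes forward a non-approximable point from $[0,1]^d$ into $B$. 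You instead argue the contrapositive directly in $\R^d$: assuming a ball $B\subseteq W_{n,Q_0}$, you cover $\R^d$ by rational \emph{translates} of $B$ and pull approximability back to all of $\R^d$. Your version is more elementary and self-contained for $\R^d$ (translations suffice, no scaling needed), while the paper's formulation buys generality---the same proposition yields the analogous comeagerness statement on rational quadratic hypersurfaces and other Diophantine spaces with enough automorphisms.
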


The deduction of Theorem \ref{theorem2} from Theorem \ref{theorem1}(ii) actually holds in a great degree of generality; see Section \ref{sectiongeneral} for details.

\begin{remark*}
Theorem \ref{theorem2} illustrates a difference between the theory of $\psi$-approximability and $\Psi$-approximability: while the set of $\psi$-approximable points is always comeager, the set of $\Psi$-approximable points can be meager if $\Psi$ decays sufficiently quickly.
\end{remark*}

In the case $d = 1$, we further investigate the set of exceptions to $\Psi$-approximability via a kind of ``inverse duality principle'' reminiscent of the equality between the set of Dirichlet improvable points and the set of badly approximable points \cite{DavenportSchmidt1}:

\begin{theorem}
\label{theorem3}
Fix $(a,A)\in\R^2$ such that $a < 1 < \min(A,A + a)$, and let
\begin{align*}
b &= \min(A,A + a) - 1,&
c &= (A - |a|)/b.
%b &= \begin{cases}
%A - 1 & a \geq 0\\
%A + a - 1 & a \leq 0
%\end{cases},&
%c &= \begin{cases}
%\frac{A - a}{A - 1} & a \geq 0\\
%\frac{A + a}{A + a - 1} & a \leq 0
%\end{cases}.
\end{align*}
Suppose that $c > 2$. Then there exists a constant $C > 0$ such that the following implications hold for $x\in\R$ and $\alpha > 0$:
\begin{itemize}
\item[(i)] If $x$ is $\alpha\Psi_{a,A}$-approximable, then $x$ is not $(C\alpha^b)^{-1}\psi_c$-approximable.
\item[(ii)] If $x$ is not $\alpha\Psi_{a,A}$-approximable, then $x$ is $C\alpha^{-b}\psi_c$-approximable.
\end{itemize}
When $c = 2$, the conclusion holds if $\alpha$ is sufficiently small.
\end{theorem}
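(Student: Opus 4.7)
I would approach this by reformulating both $\Psi_{a,A}$- and $\psi_c$-approximability as growth conditions on the sequence of convergent denominators $(q_n)$ of $x$, and then comparing them directly.

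For $\psi_c$-approximability, the hypothesis $c > 2$ (with $c = 2$ handled by requiring $\alpha$ small) guarantees via Legendre's criterion that every $\psi_c$-approximation with sufficiently large denominator must be a convergent of $x$. Combined with the standard two-sided estimate $1/(2 q_n q_{n+1}) \leq |x - p_n/q_n| \leq 1/(q_n q_{n+1})$, this gives that $x$ is $\beta\psi_c$-approximable if and only if $q_{n+1} > q_n^{c-1}/(K\beta)$ for infinitely many $n$, where $K$ is an absolute constant.

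For $\Psi_{a,A}$-approximability, I would analyze at each scale $Q \in [q_n, q_{n+1})$ which rationals $p/q$ with $q \leq Q$ can satisfy $|x - p/q| < \alpha q^{-a} Q^{-A}$; the analysis bifurcates on the sign of $a$. When $a \geq 0$, the threshold decreases in $q$, so the convergent $p_n/q_n$ is the optimal choice, and requiring the inequality at $Q$ up to $q_{n+1} - 1$ produces a growth condition of the form $q_{n+1} \leq K' \alpha^{1/b} q_n^{c-1}$, with $b = A - 1$ and $c - 1 = (1-a)/b$. When $a < 0$, the threshold increases in $q$, so one must also consider the semiconvergents $q_n^{(k)} = k q_n + q_{n-1}$; using the best-approximation fact that $\|qx\| \geq \|q_n x\|$ for $q \in [q_n, q_{n+1}) \setminus \{q_n\}$ leads to a parallel growth condition with $b = A + a - 1$ and $c - 1 = 1/b$, matching the theorem's definitions.

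With these reformulations in hand, both implications reduce to direct comparisons of growth/gap conditions on $(q_n)$. For (i), the upper bound on $q_{n+1}$ from $\alpha \Psi_{a,A}$-approximability precludes the lower bound required for $\beta \psi_c$-approximability once $\beta$ is sufficiently small in terms of $\alpha$; with $C$ chosen appropriately in terms of the implicit constants, setting $\beta = (C\alpha^b)^{-1}$ forces a contradiction. For (ii), the failure of $\alpha \Psi_{a,A}$-approximability at infinitely many $Q$ produces infinitely many $n$ violating the growth bound on $q_{n+1}$, after which the estimate $|x - p_n/q_n| \leq 1/(q_n q_{n+1})$ gives $|x - p_n/q_n| < C \alpha^{-b}/q_n^c$ as desired. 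The main technical obstacle is the case $a < 0$: the correct exponent $c - 1 = 1/b$ emerges there only after a careful semiconvergent analysis, not from the naive convergent-only argument that suffices for $a \geq 0$. A secondary subtlety concerns the boundary case $c = 2$, where Legendre's convergent-criterion requires the relevant thresholds $\beta$ to fall below $1/2$; this is what forces the restriction to $\alpha$ sufficiently small.
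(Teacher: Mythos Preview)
Your approach is essentially the same as the paper's: translate both $\Psi_{a,A}$- and $\psi_c$-approximability into growth/gap conditions on the convergent denominators $(q_n)$ and then compare. The paper cites an external result (Kleinbock) for the $\psi_c$ side where you invoke Legendre's criterion, and for the construction in (ii) it too uses $p_n/q_n$ when $a\geq 0$ and a semiconvergent when $a\leq 0$, exactly as you propose.

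The one place where your sketch is thinner than the paper is in part (i), the step from ``$x$ is $\alpha\Psi_{a,A}$-approximable'' to the growth bound $q_{n+1}\lesssim \alpha^{1/b} q_n^{c-1}$. Your sentence ``the threshold decreases in $q$, so the convergent $p_n/q_n$ is the optimal choice'' does not by itself justify restricting attention to $p_n/q_n$: the approximant $p/q$ guaranteed by $\Psi$-approximability at $Q\approx q_{n+1}$ could a priori be an earlier convergent or an intermediate fraction, and one must argue that any such $p/q$ still forces the desired inequality. The paper handles this by proving a separate lemma: for every $p/q$ there is an index $m$ with $|x-p/q|>1/(2q_m q_{m+1})$ and $q>(1/2)q_m$, obtained by an explicit analysis of intermediate fractions. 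That lemma, together with the monotonicity $q_m^{1-a}q_{m+1}\leq q_n^{1-a}q_{n+1}$ (using $a<1$), is what reduces the general case to the convergent case. Your outline would go through once this point is filled in; the rest matches the paper's argument.
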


We remark that the case $d = 1$, $A + a < 2$ of Theorem \ref{theorem2} follows from Theorem \ref{theorem3}, since the set of Liouville numbers is comeager. On the other hand, since the set of very well approximable numbers is a Lebesgue nullset, it follows that the set considered in Theorem \ref{theorem3} is a Lebesgue nullset whenever $d = 1$ and $A + a < 2$.

{\bf Acknowledgements.} The first-named author was supported in part by the Simons Foundation grant \#245708.

\section{Proof of Theorem \ref{theorem1}}
\label{sectiontheorem1}

\begin{convention*}
\label{conventionimplied}
The symbols $\lesssim_\times$, $\gtrsim_\times$, and $\asymp_\times$ will denote coarse multiplicative asymptotics. For example, $A\lesssim_\times B$ means that there exists a constant $C > 0$ (the \emph{implied constant}) such that $A\leq C B$. It is understood that the implied constant $C$ is only allowed to depend on certain ``universal'' parameters, to be understood from context.
\end{convention*}

In this section, we fix $d\in\N$ and $a\in[0,1]$, and we let
\[
A = f_d(a) = \frac{1}{\sum_{i = 0}^{d - 1} a^i}\cdot
\]
We prove the existence of $0 < \epsilon \leq \kappa < \infty$ such that (iii) of Theorem \ref{theorem1} holds, thus completing the proof of Theorem \ref{theorem1}. The proof of the existence of $\epsilon$ provides the clearest intuition for why the formula for $f_d(a)$ ($a\in[0,1]$) naturally appears in this context.

\begin{proof}[Existence of $\epsilon$]
Let
\begin{align*}
\alpha_j &= A \sum_{i = 0}^{j - 1} a^i \;\;\;\; (j = 0,\ldots,d),
\end{align*}
and note that
\[
\alpha_i = A + a\alpha_{i - 1}, \;\; \alpha_0 = 0, \;\; \alpha_d = 1.
\]
Now fix $Q\in\N$ large to be determined, and let
\begin{align*}
n_i &= \lceil Q^{\alpha_i - \alpha_{i - 1}}\rceil & (i = 1,\ldots,d)\\
Q_j &= 2\prod_{i = 1}^j n_j & (j = 0,\ldots,d)
\end{align*}
so that
\begin{align*}
Q_0\divides Q_1\divides \cdots \divides Q_d\\
Q_i \asymp_\times Q^{\alpha_i}.
\end{align*}
Now let
\[
\xx = \left(\frac1{Q_1},\ldots,\frac1{Q_d}\right) \in\R^d.
\]
Let $\pp/q\in\Q^d$ be a rational such that $1\leq q\leq Q$. Since $Q_d > 2Q \geq 2q \geq 2 = Q_0$, there exists $i = 1,\ldots,d$ such that $Q_{i - 1} \leq 2q < Q_i$. Then
\[
qx_i = \frac{q}{Q_i} < \frac12
\]
and thus since $p_i\in\Z$,
\[
\left\|\xx - \frac\pp q\right\| \geq \frac1q |qx_i - p_i| \geq \frac{1}{q} qx_i = \frac{1}{Q_i}\cdot
\]
So
\[
q^a Q^A \left\|\xx - \frac\pp q\right\| \geq (Q_{i - 1}/2)^a Q^A/Q_i \asymp_\times Q^{A + a\alpha_{i - 1} - \alpha_i} = 1,
\]
i.e. $\|\xx - \pp/q\| \geq \epsilon \Psi_{a,A}(q,Q)$, where $\epsilon$ is the reciprocal of the implied constant.
\end{proof}

\begin{proof}[Existence of $\kappa$]
Fix $\kappa > 0$ large to be determined, and fix $\xx\in\R^d$ and $Q\geq Q_0 := 1$. For each $q = 1,\ldots,Q$, let $\xx_q$ denote the element of $q\xx + \Z^d$ which minimizes $\|\xx_q\|$. By contradiction, suppose that
\begin{equation}
\label{contradictionhypothesis}
\|\xx_q\| \geq \kappa q^{1 - a} Q^{-A} \all q = 1,\ldots,Q.
\end{equation}
\begin{claim}
\label{claimminkowski}
There exists a sequence $(q_i)_0^{k - 1}$ in $\{1,\ldots,Q\}$ such that if
\[
\yy_i = \xx_{q_i}, \;\; r_i = \|\yy_i\|,
\]
then $(\yy_i)_0^{k - 1}$ is a linearly independent set in $\R^d$ and
\begin{equation}
\label{minkowski}
\prod_{i = 0}^{k - 1} r_i \lesssim_\times 1/Q.
\end{equation}
\end{claim}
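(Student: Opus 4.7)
The plan is to apply Minkowski's second theorem on successive minima to the lattice
\[
\Lambda = \{(q,\, q\xx - \pp) : q \in \Z,\ \pp \in \Z^d\} \subset \R^{d+1},
\]
of covolume $1$, together with the symmetric convex body
\[
C = \{(q, \yy) \in \R^{d+1} : |q| \leq Q,\ \|\yy\| \leq 1\},
\]
of volume $2^{d+1}Q$. This yields successive minima $\mu_1 \leq \cdots \leq \mu_{d+1}$ with $\prod_{i=1}^{d+1}\mu_i \asymp_\times 1/Q$, realized by linearly independent lattice vectors $\mathbf{v}_i = (p_i, \mathbf{z}_i) \in \Lambda$ satisfying $|p_i| \leq Q\mu_i$ and $\|\mathbf{z}_i\| \leq \mu_i$.

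Let $k := \#\{i : \mu_i \leq 1\}$. Minkowski's first theorem (applied to $C$, whose volume is at least $2^{d+1}$) forces $\mu_1 \leq 1$, so $k \geq 1$; and since each $\mu_i > 1$ for $i > k$ contributes a factor $\geq 1$, the partial product satisfies $\prod_{i=1}^{k}\mu_i \leq \prod_{i=1}^{d+1}\mu_i \lesssim_\times 1/Q$. For each $i \leq k$ we have $|p_i| \leq Q$; moreover, if $\mu_i < 1$ then $p_i \neq 0$, since otherwise $\mathbf{v}_i$ would be a nonzero integer vector in $\R^d$ of norm strictly less than $1$. After sign-flips we may assume $p_i \in \{1, \ldots, Q\}$ for these $i$ (borderline cases with $\mu_i = 1$ and $p_i = 0$ multiply the product by $1$ and can be discarded). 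Setting $q_{i-1} = p_i$ and $\yy_{i-1} = \xx_{q_{i-1}}$, the minimum-norm property of $\xx_q$ gives $r_{i-1} = \|\yy_{i-1}\| \leq \|\mathbf{z}_i\| \leq \mu_i$, so the product bound $\prod r_{i-1} \leq \prod \mu_i \lesssim_\times 1/Q$ would be achieved.

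It remains to ensure linear independence of the $\yy_{i-1}$'s in $\R^d$. Since $\mathbf{v}_1, \ldots, \mathbf{v}_k$ are linearly independent in $\R^{d+1}$ and the projection $(q, \yy) \mapsto \yy$ has one-dimensional kernel, the projected $\mathbf{z}_i$'s span a subspace of dimension at least $k - 1$. In the generic case (for irrational $\xx$) the rank is exactly $k$, and we take all $k$ vectors. A rank drop would force $\Lambda$ to contain a nontrivial element of $\R \times \{0\}$ within the span, which means $\xx$ is rational with some denominator $q_0 \leq Q$---but then the contradiction hypothesis $\|\xx_{q_0}\| \geq \kappa q_0^{1-a}Q^{-A} > 0$ fails outright (as $\xx_{q_0} = 0$), so this case need not be treated for the purposes of the outer proof by contradiction. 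The main obstacle is precisely this linear-algebra/rationality bookkeeping: verifying that after discarding any ``integer-only'' or ``rational'' lattice vectors, the surviving sub-family still projects to a linearly independent family in $\R^d$ while preserving the product bound inherited from Minkowski.
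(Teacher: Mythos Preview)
Your instinct to use Minkowski's second theorem is natural, but the argument has a genuine gap precisely where you flag it: the claim that a rank drop in the projections $\zz_i$ forces $\xx\in\Q^d$ is false. Rank drop only tells you that $\R\times\{\0\}$ lies in the \emph{real span} of $\vv_1,\dots,\vv_k$; it does not produce a nonzero lattice point on that line. Concretely, take $d=2$ and $\xx=(\sqrt 2,0)$. Every $\xx_q$ lies on the $x$-axis, so any two of your $\yy_i$'s are linearly dependent, yet $\xx\notin\Q^2$ and the contradiction hypothesis $\|\xx_q\|>0$ is perfectly compatible with this $\xx$.

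The failure is not merely cosmetic. In this example, with your body $C=[-Q,Q]\times[-1,1]^2$, the first two successive minima are both $\asymp_\times Q^{-1/2}$, realized at denominators $p_i\asymp_\times\sqrt Q$ (balancing $|p_i|/Q$ against $|p_i\sqrt2-*|$). Both $\zz_1,\zz_2$ lie on the $x$-axis, so you cannot keep both; and keeping only one leaves you with a single $r_0\asymp_\times Q^{-1/2}$, which does not satisfy $\prod r_i\lesssim_\times 1/Q$. The correct witness here is $k=1$ with $q_0$ equal to the largest convergent denominator $\leq Q$, giving $r_0\asymp_\times 1/Q$ --- but the successive-minima construction does not find it, because that vector has $|q_0|/Q\asymp 1$ and so sits at the boundary $\mu\asymp 1$ of your body rather than near the first minimum.

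The paper sidesteps this by building linear independence into the construction: it greedily chooses $q_j$ to minimize the distance from $\xx_{q_j}$ to the span $V_j$ of the previously chosen vectors (subject to a Dirichlet-domain constraint), and then applies Minkowski's \emph{first} theorem to the region $\Delta_k\cap[-1/2,1/2]^d$ left uncovered when the process halts. This greedy step is exactly what forces the right choice (e.g.\ $q_0\asymp Q$ rather than $\sqrt Q$) in degenerate examples like the one above.

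A secondary issue: even when the $\zz_i$ are independent, with your body $[-1,1]^d$ you only get $\|\zz_i\|\leq 1$, so $\zz_i$ need not be the minimal-norm coset representative $\xx_{p_i}$; independence of the $\zz_i$'s then does not transfer to the $\yy_i$'s. This is easily fixed by using $[-1/2,1/2]^d$ instead, but the first gap above is the essential one.
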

\begin{subproof}
We choose the sequence $(q_i)_0^{k - 1}$ recursively. Suppose that $(q_i)_0^{j - 1}$ have been defined for some $j \geq 0$. Let $\Lambda_j = \sum_{i = 0}^{j - 1} \Z \yy_i$, and let $\Delta_j$ denote the Dirichlet fundamental domain of $\Lambda_j$, i.e. the set of points in $\R^d$ which are closer to $\0$ (in the Euclidean metric, which we represent by $\dist$) than to any other point of $\Lambda_j$. Let $V_j = \R\Lambda_j$. Then we choose $q_j\in\{1,\ldots,Q\}$ so as to minimize $\dist(\yy_j,V_j)$, subject to the constraint that $\yy_j\in\Delta_j$. If no value of $q_j$ satisfies this constraint, then we let $k = j$ and stop.

Our first observation is that for $i < j$, since $\yy_i\notin\Delta_j\ni\yy_j$, we have $\yy_i\neq \yy_j \in \Delta_i$, so the definition of $q_i$ implies that $\dist(\yy_i,V_i) \leq \dist(\yy_j,V_i)$. On the other hand, since $\yy_j\in\Delta_j$, we have $\dist(\0,\yy_j) \leq \dist(\pm\yy_i,\yy_j)$. It then follows from a geometric calculation that
\[
\dist(\yy_j,V_{i + 1}) \geq \sqrt{3/4} \dist(\yy_j,V_i),
\]
so $\dist(\yy_j,V_j) \asymp_\times \dist(\yy_j,V_0) = r_j$. This proves that $\yy_j$ is linearly independent of $\yy_0,\ldots,\yy_{j - 1}$. In particular, the recursive construction halts at some stage $k\leq d$.

To prove \eqref{minkowski}, we first compute
\[
\Vol(\Delta_k\cap [-1/2,1/2]^d) \asymp_\times \Vol(V_k\cap \Delta_k) = \prod_{j = 0}^{k - 1} \dist(\yy_j,V_j) \asymp_\times \prod_{j = 0}^{k - 1} r_j.
\]
On the other hand, since the algorithm halted at step $k$, we know that
\[
\xx_q\notin \Delta_k \all q = 1,\ldots,Q.
\]
So the region
\[
(\Delta_k\cap [-1/2,1/2]^d) \times [-Q,Q]\subset \R^{d + 1}
\]
contains no nontrival points of the lattice
\[
\{(\rr + q\xx,q) : \rr\in\Z^d , q\in\Z\} \leq\R^{d + 1}.
\]
Applying Minkowski's theorem completes the proof of \eqref{minkowski}.
\end{subproof}

After reordering, we can without loss of generality assume that the sequence $(q_i)_{i = 0}^{k - 1}$ given by Claim \ref{claimminkowski} satisfies
\begin{equation}
\label{reordering}
q_0/r_0 \leq q_1/r_1 \leq\cdots\leq q_{k - 1}/r_{k - 1}.
\end{equation}
For each $i = 0,\ldots,k - 1$ let $m_i = \lceil 1/(2dr_i) \rceil \asymp_\times 1/r_i$. Fix $j = 0,\ldots,k - 1$, and let
\[
S_j = \{(n_0,\ldots,n_j)\in\Z^{j + 1} : |n_i| < m_i \all i\}.
\]
Then for all $(n_0,\ldots,n_j)\in S_j$, we have
\[
0 < \left\|\sum_{i = 0}^j n_i \yy_i\right\| \leq \sum_{i = 0}^j m_i r_i \leq 1/2
\]
and thus
\[
\sum_{i = 0}^j n_i q_i \xx \equiv \sum_{i = 0}^j n_i \yy_i \not\equiv \0,
\]
so
\[
\sum_{i = 0}^j n_i q_i \neq 0.
\]
It follows that the map $S_j\cap\N^{j + 1} \ni (n_0,\ldots,n_j)\mapsto \sum_0^j n_i q_i \ni \{0,\ldots,\sum_0^j m_i q_i - 1\}$ is injective, so
\[
\#(S_j\cap\N^{j + 1}) = \prod_{i = 0}^j m_i \geq \sum_{i = 0}^j m_i q_i.
\]
Thus by \eqref{reordering},
\[
\prod_{i = 0}^j \frac{1}{r_i} \gtrsim_\times \sum_{i = 0}^j \frac{q_i}{r_i} \asymp_\times \frac{q_j}{r_j}
\]
i.e.
\[
q_j \gtrsim_\times \prod_{i = 0}^{j - 1} \frac{1}{r_i}\cdot
\]
Combining with \eqref{contradictionhypothesis} gives
\[
r_j \geq \kappa q_j^{1 - a} Q^{-A} \gtrsim_\times \kappa Q^{-A} \prod_{i = 0}^{j - 1} r_i^{-(1 - a)}.
\]
Writing $R_j = \log_Q(1/r_j)$, we get
\[
R_j + (1 - a) \sum_{i = 0}^{j - 1} R_i \leq A - \log_Q(\kappa/C_1)
\]
for some constant $C_1 > 0$. Multiplying by $a^{k - 1 - j}$ and summing over $j = 0,\ldots,k - 1$ gives
\[
\sum_{i = 0}^{k - 1} R_i \leq (A - \log_Q(\kappa/C_1))\sum_{i = 0}^{k - 1} a^i;
\]
combining with \eqref{minkowski} gives
\[
1 \leq (A - \log_Q(\kappa/C_2))\sum_{i = 0}^{k - 1} a^i,
\]
where $C_2 > 0$ is a different constant. After choosing $\kappa > C_2$, this is a contradiction to the hypothesis that $A = f_d(a)$.
\end{proof}

\draftnewpage
\section{Proof of Theorem \ref{theorem2}}
\label{sectiongeneral}

The deduction of Theorem \ref{theorem2} from Theorem \ref{theorem1} can be done in a high level of generality, so we recall the following notion:

\begin{definition}[\cite{FishmanSimmons5}]
A \emph{Diophantine space} is a triple $(X,\QQ,H)$, where $X$ is a complete metric space $X$, $\QQ\subset X$ is a dense subset, and $H:\QQ\to(0,\infty)$.
\end{definition}

The prototypical example is the triple $(\R^d,\Q^d,H_\std)$, where $H_\std$ is the standard height function on $\Q^d$, i.e. $H_\std(\pp/q) = q$ whenever $\pp/q\in\Q^d$ is given in reduced form.

\begin{definition}
Let $(X,\QQ,H)$ be a Diophantine space. An \emph{automorphism} of $(X,\QQ,H)$ is a bi-Lipschitz map $\Phi:X\to X$ such that $\Phi(\QQ) = \QQ$ and $H\circ\Phi \asymp_\times H$.

A set $K\subset X$ has the \emph{automorphism property} if for every nonempty open set $B\subset X$, there exists an automorphism $\Phi$ of $X$ such that $\Phi(K)\subset B$.
\end{definition}

\begin{example}
The unit cube $[0,1]^d$ in $(\R^d,\Q^d,H_\std)$ has the automorphism property. The required automorphisms are just affine transformations of $\R^d$ with rational coefficients.
\end{example}

We give another example to illustrate the nontriviality of our definition.

\begin{example}
Let $M$ be a nonsingular rational quadratic hypersurface in projective space $\P_\R^d$, i.e. a set of the form $M = \{[\xx] : \QQ(\xx) = 0\}$ where $\QQ$ is a nondegenerate rational quadratic form on $\R^{d + 1}$ and $[\xx]\in\P_\R^d$ denotes the point corresponding to $\xx\in\R^{d + 1}$. (See \cite{FKMS} for a more detailed exposition.) Then if $\P_\Q^d\cap M\neq\emptyset$, then $(M,\P_\Q^d\cap M,H_\std)$ is a Diophantine space (e.g. \cite[Theorem 8.1(i)]{FKMS}). If $\LL\subset M$ is a rational linear subspace of maximal dimension, then the complement of any neighborhood of $\LL$ has the automorphism property. The required automorphisms are rational projective transformations of $\P_\R^d$ which preserve $\QQ$.
\end{example}

We now state the main result of this section:

\begin{proposition}
\label{propositiongeneral}
Let $(X,\QQ,H)$ be a Diophantine space and let $K\subset X$ have the automorphism property. Suppose that for all $Q > 0$, the set $\{r\in\QQ : H(r)\leq Q\}$ intersects each ball in only finitely many points. Fix $(a,A)\in\R^2$, and suppose that $K$ is not uniformly $\kappa\Psi_{a,A}$-approximable for any $\kappa > 0$. Then the set
\begin{equation}
\label{general}
\{x\in X : \text{$x$ is not $\kappa\Psi_{a,A}$-approximable for any $\kappa > 0$}\}
\end{equation}
is comeager.
\end{proposition}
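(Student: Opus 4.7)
The plan is Baire category. The complement of the set in \eqref{general} equals $\bigcup_{n,m\in\N} E_{n,m}$, where $E_{n,m}$ denotes the set of $(n\Psi_{a,A},m)$-approximable points. Writing $E_{n,m}=\bigcap_{Q\geq m}U_{n,Q}$ with $U_{n,Q}$ the open set of points admitting a rational $r\in\QQ$ of height $H(r)\leq Q$ satisfying $\|x-r\|<n\Psi_{a,A}(H(r),Q)$, and noting $E_{n,m}\subset U_{n,Q}$ for any $Q\geq m$, it suffices to produce, for each nonempty open $B\subset X$ and each $n,m\in\N$, a nonempty open $B'\subset B$ and a $Q\geq m$ with $B'\cap U_{n,Q}=\emptyset$; this establishes nowhere-denseness of $E_{n,m}$ and hence meagerness of the union.

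Fix $n,m$ and a nonempty open $B\subset X$. By the automorphism property, I choose an automorphism $\Phi$ of $(X,\QQ,H)$ with $\Phi(K)\subset B$; let $L$ be its bi-Lipschitz constant and $C$ its height-comparability constant, so $L^{-1}\|x-y\|\leq\|\Phi(x)-\Phi(y)\|\leq L\|x-y\|$ and $C^{-1}H(r)\leq H(\Phi(r))\leq CH(r)$. A direct bookkeeping calculation---transporting a good rational approximant $r$ of $\Phi(y)$ back via $s=\Phi^{-1}(r)$ and using the comparabilities $H(r)\asymp_\times H(s)$ and $\Psi_{a,A}(H(r),Q)\asymp_\times\Psi_{a,A}(H(s),CQ)$ to absorb the losses---yields a scaling lemma: there exist $n',m'$ depending only on $n$, $m$, $L$, $C$, $a$, $A$ such that whenever $\Phi(y)$ is $(2n\Psi_{a,A},m)$-approximable, $y$ itself is $(n'\Psi_{a,A},m')$-approximable.

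By the standing hypothesis, $K$ is not uniformly $n'\Psi_{a,A}$-approximable, so I may choose $y_0\in K$ which is not $(n'\Psi_{a,A},m')$-approximable. The contrapositive of the scaling lemma produces some $Q\geq m$ with $\|x_0-s\|\geq 2n\Psi_{a,A}(H(s),Q)$ for every $s\in\QQ$ of height at most $Q$, where $x_0:=\Phi(y_0)\in B$. I then convert this pointwise inequality to a small-ball statement. The quantity $\Psi_{a,A}(H(s),Q)$ is bounded above uniformly in $s$ (for $Q$ fixed and $H(s)\geq 1$), so the inequality $\|y-s\|>n\Psi_{a,A}(H(s),Q)$ is automatic for $s$ outside a fixed bounded neighborhood $N$ of $x_0$, provided $y$ stays close enough to $x_0$. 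By the finiteness hypothesis only finitely many $s\in\QQ$ with $H(s)\leq Q$ lie in $N$, and each contributes a strictly positive slack $\|x_0-s\|-n\Psi_{a,A}(H(s),Q)\geq n\Psi_{a,A}(H(s),Q)>0$. Choosing $\rho>0$ smaller than the minimum of these finitely many slacks, than the distance from $x_0$ to $X\setminus B$, and small enough to handle the ``outside'' case, gives $B'=B(x_0,\rho)\subset B$ with $B'\cap U_{n,Q}=\emptyset$, as required.

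The main obstacle is the bookkeeping in the scaling lemma, in particular engineering a factor-of-$2$ slack so that the strict-inequality gap $\|x_0-s\|\geq 2n\Psi_{a,A}(H(s),Q)$ (rather than merely $\geq n\Psi_{a,A}(H(s),Q)$) survives the passage from $y_0$ through $\Phi$ and then into the small ball around $x_0$; the rest is a routine Baire-category and counting argument built on the finiteness hypothesis.
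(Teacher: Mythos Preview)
Your proof is correct and follows essentially the same route as the paper: Baire category, with the automorphism used to transport a witness $y_0\in K$ of non-$(n'\Psi_{a,A},m')$-approximability into the given open set $B$, and the local finiteness hypothesis used to pass from the point $x_0=\Phi(y_0)$ to a small open ball. The only organizational difference is that the paper works directly with the complements $U_{\kappa,Q_0}=X\setminus E_{\kappa,Q_0}$, first showing they are open (via local finiteness) and then dense (via the automorphism), which sidesteps the need for your factor-of-$2$ slack; but the underlying content is identical.
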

The terms here should be understood to refer to the obvious generalizations of the corresponding terms in Definition \ref{definitionPsiapproximable} to the setting of Diophantine spaces. We remark that Theorem \ref{theorem2} follows immediately from Proposition \ref{propositiongeneral} and Theorem \ref{theorem1}(ii), since for any function $\Psi$, $[0,1]^d$ is uniformly $\Psi$-approximable if and only if $\R^d$ is.
\begin{proof}
The set \eqref{general} can be written in the form $\bigcap_{\kappa,Q_0\in\N} U_{\kappa,Q_0}$, where $U_{\kappa,Q_0}$ is the set of points which are not $(\kappa\Psi_{a,A},Q_0)$-approximable, i.e.
\[
U_{\kappa,Q_0} = \{x\in X : \exists Q \geq Q_0 \;\; \forall r\in \QQ \text{ if $H(r) \leq Q$ then $\dist(r,x) > \kappa\Psi(H(r),Q)$}\}.
\]
Since for each $Q > 0$, the set $\{r\in\QQ : H(r)\leq Q\}$ has finite intersection with every ball, the intersection over $r\in\QQ$ occurring in this definition can be locally replaced by a finite intersection, so $U_{\kappa,Q_0}$ is open. To complete the proof, it suffices to show that $U_{\kappa,Q_0}$ is dense. Indeed, let $B\subset X$ be a nonempty open set, and let $\Phi:X\to X$ be the automorphism guaranteed by the automorphism property, so that $\Phi(K)\subset B$. Let $C_1 > 0$ be the bi-Lipschitz constant of $\Phi$, and let $C_2 > 0$ be the bound on height distortion (i.e. the implied constant of the asymptotic $H\circ\Phi\asymp_\times H$). Since $K$ is not uniformly $C_1 C_2^{|a| + |A|}\kappa\Psi_{a,A}$-approximable, there exists $x\in K$ such that $x$ is not $(C_1 C_2^{|a| + |A|}\kappa\Psi_{a,A},C_2 Q_0)$-approximable. A calculation shows that $\Phi(x)\in U_{\kappa,Q_0}$. Thus $U_{\kappa,Q_0}\cap B\neq\emptyset$.
\end{proof}

\section{Proof of Theorem \ref{theorem3}}
Our main tool for proving Theorem \ref{theorem3} is the following lemma which is a way of quantifying the fact that the convergents of a real number are the ``best approximations'' to that real number.

\begin{lemma}
\label{lemmaconvergents}
Fix $x\in\R$, and let $(p_n/q_n)_1^\infty$ be the sequence of convergents of $x$. Then for all $p/q\in\Q$, there exists $n\in\N$ such that
\begin{align*}
\left|x - \frac pq\right| &> \frac{1}{2 q_n q_{n + 1}},&
q &> (1/2) q_n.
\end{align*}
\end{lemma}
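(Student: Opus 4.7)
The plan is to choose a single index $n$ based on $q$ and then split into two cases according to whether $p/q$ equals the $n$th convergent. Specifically, I would take $n$ to be the largest index with $q_n < 2q$. Maximality forces $q_{n+1} \geq 2q$, so I simultaneously get $q > q_n/2$ (the second desired inequality) and $q \leq q_{n+1}/2$ (which I will use to bootstrap the first). Such an $n$ exists provided $q \geq 1$ and the first available convergent denominator is $1$ (or, more generally, $\leq q$), which is the standard situation; the case of rational $x$ can be handled by just running the argument on the finite convergent sequence, and is not an essential obstacle.

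Case 1: $p/q = p_n/q_n$. Then $q = q_n$, and I would invoke the classical two-sided estimate $|x - p_n/q_n| > 1/\bigl(q_n(q_n + q_{n+1})\bigr)$. My choice of $n$ forces $q_{n+1} \geq 2q_n > q_n$, hence $q_n + q_{n+1} < 2q_{n+1}$, which gives $|x - p/q| > 1/(2 q_n q_{n+1})$.

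Case 2: $p/q \neq p_n/q_n$. Then $q_n p - q p_n$ is a nonzero integer, so $|q_n p - q p_n| \geq 1$. Using the identity
\[
q_n p - q p_n = q(q_n x - p_n) - q_n(qx - p),
\]
the triangle inequality, and the standard bound $|q_n x - p_n| < 1/q_{n+1}$, I obtain $q_n |qx - p| > 1 - q/q_{n+1}$. Since $q \leq q_{n+1}/2$, the right-hand side is at least $1/2$, yielding $|x - p/q| > 1/(2 q q_n)$. Finally, using $2q \leq q_{n+1}$ once more, $1/(2qq_n) \geq 1/(q_n q_{n+1}) > 1/(2 q_n q_{n+1})$.

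The entire proof is this two-case split, and the only delicate point is the choice of $n$: the factor of $2$ appearing in both inequalities of the lemma is precisely calibrated so that the single constraint $q_n < 2q \leq q_{n+1}$ simultaneously yields the second bound and gives enough room in each case to recover the first. I do not foresee a genuine obstacle beyond verifying that the standard continued-fraction estimates $|q_n x - p_n| < 1/q_{n+1}$ and $|x - p_n/q_n| > 1/(q_n(q_n + q_{n+1}))$ hold for the indices used.
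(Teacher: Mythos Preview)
Your argument is correct and takes a genuinely different route from the paper's. You fix $n$ by the single condition $q_n < 2q \leq q_{n+1}$ and then split on whether $p/q$ coincides with $p_n/q_n$; in the nontrivial case you bound $|qx-p|$ from below directly via the integrality of $q_n p - q p_n$ and the estimate $|q_n x - p_n| < 1/q_{n+1}$. The paper instead first reduces (via Khinchin's theorem on best approximations) to the case where $p/q$ is an intermediate fraction $(rp_n + p_{n-1})/(rq_n + q_{n-1})$, and then splits on $r \leq \omega_n/2$ versus $r \geq \omega_n/2$, outputting the index $n$ or $n+1$ accordingly. Your approach is more self-contained, needing only the two standard convergent estimates and avoiding the intermediate-fraction machinery; the paper's route gives a more explicit picture of where $p/q$ sits between consecutive convergents. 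One small remark: in Case~1 your assertion $q = q_n$ presumes $p/q$ is in lowest terms, but this is harmless---the bound $|x - p_n/q_n| > 1/(q_n(q_n+q_{n+1})) > 1/(2q_nq_{n+1})$ only needs $q_n < q_{n+1}$, which always holds.
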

\comdavid{I think we had a lemma like this in an old version of the Gauss map paper, but I couldn't find it since that was before I moved to OSU.}
Before we begin the proof, we recall (cf. \cite[Theorems 9 and 13]{Khinchin_book}) that for all $n$,
\[
\frac{1}{2 q_n q_{n + 1}} < \left|x - \frac{p_n}{q_n}\right| < \frac{1}{q_n q_{n + 1}}\cdot
\]
\begin{proof}[Proof of Lemma \ref{lemmaconvergents}]
By \cite[Theorem 15]{Khinchin_book}, we may without loss of generality suppose that $p/q$ is an \emph{intermediate fraction}, i.e.
\[
\frac pq = \frac{r p_n + p_{n - 1}}{r q_n + q_{n - 1}}
\]
for some $n\in\N$ and $1\leq r\leq \omega_n$, where $p_{n + 1} = \omega_n p_n + p_{n - 1}$ and $q_{n + 1} = \omega_n q_n + q_{n - 1}$. Since $p/q$ and $x$ lie on opposite sides of $p_{n + 1}/q_{n + 1}$, we get
\[
\left|x - \frac pq\right| > \left|\frac{p_{n + 1}}{q_{n + 1}} - \frac pq\right| = \left|\frac{\omega_n p_n + p_{n - 1}}{\omega_n q_n + q_{n - 1}} - \frac{r p_n + p_{n - 1}}{r q_n + q_{n - 1}}\right|
= \frac{\omega_n - r}{q q_{n + 1}}\cdot
\]
So if $r \leq \omega_n/2$, then
\[
\left|x - \frac pq\right| \geq \frac{r}{q q_{n + 1}} > \frac{1}{2 q_n q_{n + 1}}\cdot
\]
On the other hand, if $r\geq \omega_n/2$, then $q > (1/2) q_{n + 1}$, and
\[
\left|x - \frac pq\right| > \left|x - \frac{p_{n + 1}}{q_{n + 1}}\right| > \frac{1}{2 q_n q_{n + 1}}\cdot
\]
\end{proof}

Now suppose that $x$ is $\alpha\Psi_{a,A}$-approximable for some $a < 1 < \min(A,A + a)$ and $\alpha > 0$. Fix $n\in\N$ large and let $Q = q_{n + 1}/2$. Then there exists $p/q\in\Q$ with $1\leq q\leq Q$ such that
\[
\left|x - \frac pq\right| < \alpha q^{-a} Q^{-A}.
\]
Let $m\leq n$ be chosen so that $(1/2) q_m < q \leq (1/2) q_{m + 1}$. By Lemma \ref{lemmaconvergents},
\[
\left|x - \frac pq\right| > \frac{1}{2 q_m q_{m + 1}}
\]
and thus
\[
\frac{1}{q_m q_{m + 1}} \lesssim_\times \alpha q^{-a} q_{n + 1}^{-A} \lesssim_\times \begin{cases}
\alpha q_m^{-a} q_{n + 1}^{-A} & a\geq 0\\
\alpha q_{m + 1}^{-a} q_{n + 1}^{-A} & a\leq 0
\end{cases}.
\]
Rearranging, we have
\[
\alpha^{-1} q_{n + 1}^A \lesssim_\times
\begin{cases}
q_m^{1 - a} q_{m + 1} & a \geq 0\\
q_m q_{m + 1}^{1 - a} & a \leq 0
\end{cases}
\leq
\begin{cases}
q_n^{1 - a} q_{n + 1} & a \geq 0\\
q_n q_{n + 1}^{1 - a} & a \leq 0
\end{cases},
\]
where the last inequality is due to the assumption $a < 1$. Rearranging again, we get
\[
q_{n + 1} \lesssim_\times \alpha^b q_n^{c - 1}
\]
for all sufficiently large $n$. Since $c\geq 2$, with $\alpha$ small if equality holds, we get $\alpha^b q_n^{c - 1} \geq 2 q_n$ for all sufficiently large $n$. Thus we can apply \cite[Theorem 8.5]{Kleinbock5} to complete the proof.

On the other hand, suppose that $x$ is not $C\alpha^{-b}\psi_c$-approximable. Then by \cite[Theorem 8.5]{Kleinbock5},
\[
q_{n + 1} \lesssim_\times C^{-1} \alpha^b q_n^{c - 1}
\]
for all sufficiently large $n$. Fix $Q\in\N$ large and let $n$ be chosen so that $q_n \leq Q < q_{n + 1}$. First suppose that $a\geq 0$, and let $p/q = p_n/q_n$. Then
\[
q^a Q^A \left|x - \frac pq\right| < q_n^a q_{n + 1}^A \frac{1}{q_n q_{n + 1}} = \frac{q_{n + 1}^b}{q_n^{1 - a}} \lesssim_\times C^{-1/b} \alpha.
\]
On the other hand, if $a\leq 0$, then let $1\leq r\leq \omega_n$ be chosen so that $r q_n + q_{n - 1} \leq Q < (r + 1) q_n + q_{n - 1}$. Then
\[
q^a Q^A \left|x - \frac pq\right| \leq q^a Q^A \left|x - \frac{p_n}{q_n}\right| \asymp_\times Q^{A + a} \frac{1}{q_n q_{n + 1}} \leq \frac{q_{n + 1}^b}{q_n} \lesssim_\times C^{-1/b} \alpha.
\]
Either way, if $C$ is sufficiently large, then $|x - p/q| < \alpha\Psi_{a,A}(q,Q)$. Thus $x$ is $\alpha\Psi_{a,A}$-approximable.

\bibliographystyle{amsplain}

\bibliography{bibliography}

\providecommand{\bysame}{\leavevmode\hbox to3em{\hrulefill}\thinspace}
\providecommand{\MR}{\relax\ifhmode\unskip\space\fi MR }
% \MRhref is called by the amsart/book/proc definition of \MR.
\providecommand{\MRhref}[2]{%
  \href{http://www.ams.org/mathscinet-getitem?mr=#1}{#2}
}
\providecommand{\href}[2]{#2}
\begin{thebibliography}{1}

\bibitem{Baker1}
R.~C. Baker, \emph{Singular {$n$}-tuples and {H}ausdorff dimension}, Math.
  Proc. Cambridge Philos. Soc. \textbf{81} (1977), no.~3, 377--385.

\bibitem{Baker2}
\bysame, \emph{Singular {$n$}-tuples and {H}ausdorff dimension. {II}}, Math.
  Proc. Cambridge Philos. Soc. \textbf{111} (1992), no.~3, 577--584.

\bibitem{DavenportSchmidt1}
H.~Davenport and W.~M. Schmidt, \emph{Dirichlet's theorem on diophantine
  approximation}, Symposia Mathematica, Vol. IV (INDAM, Rome, 1968/69),
  Academic Press, London, 1970, pp.~113--132.

\bibitem{FKMS}
L.~Fishman, D.~Y. Kleinbock, K.~Merrill, and D.~S. Simmons, \emph{Intrinsic
  {D}iophantine approximation on manifolds},
  \url{http://arxiv.org/abs/1405.7650}, preprint 2014.

\bibitem{FishmanSimmons5}
L.~Fishman, D.~S. Simmons, and M.~Urba\'nski, \emph{{D}iophantine approximation
  in {B}anach spaces}, \url{http://arxiv.org/abs/1302.2275}, preprint 2013, to
  appear in J. Th\'eor. Nombres Bordeaux.

\bibitem{Khinchin_book}
A.~Y. Khinchin, \emph{Continued fractions}, The University of Chicago Press,
  Chicago, Ill.-London, 1964.

\bibitem{Kleinbock5}
D.~Y. Kleinbock, \emph{Metric {D}iophantine approximation and dynamical
  systems}, \url{http://people.brandeis.edu/~kleinboc/203b/lectures2010.pdf},
  unpublished lecture notes.

\end{thebibliography}

\end{document}